\newtheorem{theorem}{Theorem}[section]
\newtheorem{lemma}[theorem]{Lemma}
\newtheorem{proposition}[theorem]{Proposition}
\theoremstyle{definition}
\newtheorem*{definition}{Definition}
\newtheorem*{definition*}{Definition}
\numberwithin{equation}{section}
\begin{document}
\title{Locally finite groups containing a $2$-element\\ with Chernikov centralizer}

\author{E. I. Khukhro}
\address{Sobolev Institute of Mathematics, Novosibirsk, 630\,090,
Russia,\newline and University of Lincoln, U.K.} \email{khukhro@yahoo.co.uk}
\thanks{This work was supported by CNPq-Brazil. The first author thanks  CNPq-Brazil
and the University of Brasilia for support and hospitality that he
enjoyed during his visits to Brasilia.}

\author{N. Yu. Makarenko}
\address{Sobolev Institute of Mathematics, Novosibirsk, 630\,090,
Russia}\email{natalia\_makarenko@yahoo.fr} \thanks{The second
author was supported  by the Russian Science Foundation, project no. 14-21-00065}

\author{P. Shumyatsky}
\address{Department of Mathematics, University of Brasilia, DF~70910-900, Brazil}
\email{pavel@unb.br}

\keywords{Locally finite group, centralizer, Chernikov group, soluble group, derived length,
 automorphism}
\subjclass{Primary 20F50; secondary 20E34, 20F16}

\begin{abstract}
Suppose that a locally finite group $G$ has a $2$-element $g$
with Chernikov centralizer. 
It is proved that if the involution in $\langle g\rangle$ has nilpotent centralizer,  then $G$ has a soluble subgroup of finite index.
\end{abstract}

\maketitle
\hfill {\sl to the memory of Brian Hartley (1939--1994)}

\section{Introduction}

Studying groups with restrictions on centralizers is one of the main avenues of group theory.
One of the well-known problems  in this area is  studying locally finite groups~$G$ containing an element $g$ with Chernikov centralizer $C_G(g)$. A Chernikov group is a finite extension of a direct product of finitely many quasicyclic $p$-groups $C_{p^{\infty}}$ for various primes~$p$. (Recall that Chernikov groups
are precisely the locally finite groups with minimal condition on subgroups, as proved independently by Kegel and Wehrfritz \cite{ke-we} and Shunkov \cite{shun}.)   Hartley \cite{ha88} proved that if $g$ is of prime-power order, then $G$ has a locally
soluble subgroup of finite index. Earlier the case where  $g$ has order $2$ was handled by Asar \cite{asar}, and the case of $g$ of arbitrary prime order by Turau \cite{tura}. Both Hartley's and Turau's works depend on the classification of finite simple groups.

Hartley posed two problems in  \cite{ha88}. One problem is whether the above results can be extended to arbitrary order of $g$ --- and so far there have been no advances in this direction. Another problem is whether one can replace ``locally soluble'' by ``soluble'' in such results. This problem was solved in the positive for an element of prime order and order 4  in \cite{shu-p} and \cite{shu-4}, respectively.

In view of the difficulty of the above problems, it makes sense to study them under certain additional conditions. In the present paper we extend the result of \cite{shu-4} for elements of order~$2^n$. Our main result is the following theorem.

\begin{theorem}\label{31} Suppose that a locally finite group $G$ contains an  element $g$ of order $2^n$ with Chernikov centralizer and the centralizer
 of the involution $g ^{2^{n-1}}$ is nilpotent. Then $G$ has a  soluble subgroup of finite index.
\end{theorem}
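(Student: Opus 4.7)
The proof strategy I would follow consists of three main reductions, with the final output being a uniform bound on derived lengths of finite subgroups.

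First, I would apply Hartley's theorem \cite{ha88}: since $g$ is of prime-power order with $C_G(g)$ Chernikov, $G$ has a locally soluble subgroup of finite index. Passing to a characteristic subgroup of finite index containing $g$, I may assume $G$ itself is locally soluble (both centralizer hypotheses survive this reduction). It is a standard fact that a locally soluble group whose finite subgroups have uniformly bounded derived length is itself soluble of that bound, so the remaining task is to establish such a uniform bound on $G$.

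Second, I would exploit the involution $t = g^{2^{n-1}}$. Since $C_G(t)$ is nilpotent, I would invoke results of Hartley--Meixner type for locally soluble groups with an involution of restricted centralizer to extract a $\langle g\rangle$-invariant locally nilpotent normal subgroup $N\trianglelefteq G$ of finite index, with a controlled decomposition $N=C_N(t)\cdot [N,t]$ where $[N,t]$ is abelian (being inverted by the involution $t$) and $C_N(t)\le C_G(t)$ is nilpotent. A crucial observation is that since $g\in C_G(t)$, the Chernikov group $C_G(g)$ sits inside the nilpotent group $C_G(t)$, so the fixed points of $g$ on the various primary components of $N$ inherit a tightly controlled Chernikov structure.

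Third, to produce the desired uniform derived-length bound I would either induct on $n$ with \cite{shu-4} as the base case $n=2$, or apply \cite{shu-4} directly to the order-$4$ element $g^{2^{n-2}}$ acting on $N$ (whose fixed-point subgroup in $N$ is controlled through $C_G(t)$ and $C_G(g)$). Combined with Khukhro-type bounds on the derived length of a locally nilpotent group admitting a $2$-automorphism with Chernikov fixed-point subgroup, this would give an absolute bound $d=d(g)$ on the derived lengths of finite $g$-invariant subgroups of $N$, so that $N$ itself is soluble of derived length $\le d$, and $G$ is soluble-by-finite as required.

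The main obstacle I expect is in the second reduction: the hypothesis on $C_G(t)$ is only nilpotence, not the stronger Chernikov condition present in the classical Hartley--Meixner theorems, so that machinery must be adapted. The decisive point is that the quasicyclic divisible part of $C_G(t)$ is governed by $C_G(g)$ through the containment $\langle g\rangle\le C_G(t)$, and it is precisely this interplay that allows one to transfer the quantitative Chernikov information from the centralizer of $g$ to the centralizer of $t$. Isolating the divisible part of each relevant Chernikov section and tracking the $\langle g\rangle$-action on it prime-by-prime is likely to be the most delicate piece of the argument.
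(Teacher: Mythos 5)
Your proposal diverges from the paper's argument and contains two concrete gaps that I do not see how to repair. First, the decomposition $N=C_N(t)\cdot[N,t]$ with $[N,t]$ abelian and inverted by $t$ is not available under the stated hypotheses. An involution inverts $[N,t]$ only in very special circumstances (essentially when it acts fixed-point-freely on the relevant subgroup, as in the classical Brauer--Fowler/Neumann situation); here $C_N(t)$ is merely nilpotent and possibly infinite, and the Hartley--Meixner machinery you invoke is calibrated to centralizers that are \emph{small} (finite of bounded order), not merely nilpotent. You acknowledge this adaptation is needed but do not supply it, and it is not a routine adaptation --- the finite-centralizer analogue (Theorem~\ref{t1} of the paper) rests on deep Lie-ring methods, not on an abelian-inverted-part decomposition. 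Second, your third step applies \cite{shu-4} to the order-$4$ element $g^{2^{n-2}}$, but that theorem requires the order-$4$ element itself to have Chernikov centralizer; only $C_G(g)$ is assumed Chernikov, and $C_G(g^{2^{n-2}})$ is not controlled by the hypotheses. The same obstruction defeats the proposed induction on $n$: the inductive hypothesis would require information about $C_G(g^2)$ that is simply not present.

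More fundamentally, your proposal never engages with the actual difficulty: extracting a \emph{uniform} derived-length bound when $C_G(g)$ is an infinite Chernikov group. The paper's route is quite different. After reducing to a locally nilpotent perfect counterexample $G$ minimal with respect to the ``size'' of the Chernikov group $C=C_G(\varphi)$, it takes a quasicyclic subgroup $A\leqslant C$, shows $[G,a]$ is a \emph{proper} normal subgroup for each $a\in A$ (so that by minimality $[G,a]\cap C$ is finite), and then applies the finite-centralizer theorem (Theorem~\ref{t1}) to each $[G,a]$. The decisive point --- emphasized in the introduction --- is that the derived-length bound $d$ in Theorem~\ref{t1} depends only on $n$ and the nilpotency class $c$, \emph{not} on the order of the finite centralizer, so $d$ is uniform over all $a\in A$; the divisibility of $A$ then forces $[[G,a],A]\leqslant K_a$ and a commutator argument places every finite subset of $G$ inside some $K_a$. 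Without this mechanism (or an equivalent one), your sketch does not yield the required uniform bound, and the proof does not close.
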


A special case of a Chernikov group is a finite group, and the results on Chernikov centralizers often use the corresponding results on finite centralizers. The proof of Theorem~\ref{31} also uses our recent result of that kind, which we state for further references. We use abbreviation, say, ``$(a,b,\dots )$-bounded'' for
``bounded above in terms of  $a, b,\dots
$ only''.

\begin{theorem}[{\cite[Corollary~1.3]{khu-mak-shu}}]\label{t1}
 Suppose that a locally finite group $G$ contains an
element $g$ of order $2^n$ with finite centralizer of order
$m=|C_G(\varphi )|$ such that the centralizer
of the involution $g ^{2^{n-1}}$ is nilpotent
of class~$c$.  Then $G$ has a soluble characteristic subgroup of finite
$(m,n,c)$-bounded index that has $(n,c)$-bounded derived length.
\end{theorem}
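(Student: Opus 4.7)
The plan is to establish Theorem~\ref{t1} by first reducing to the finite soluble case and then performing induction on $n$, the $2$-adic length of the order of $g$.

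First, I would invoke a standard Hartley-type reduction for locally finite groups: using an inverse system of finite $\langle g\rangle$-invariant sections of $G$ (exploiting that $|C_G(g)|=m$ is finite so that every finite $\langle g\rangle$-invariant subgroup sits inside a larger one with the same centralizer data), the quantitative conclusion reduces to the analogous assertion for a finite group $H$ containing $g$ of order $2^n$ with $|C_H(g)|=m$ and $C_H(g^{2^{n-1}})$ nilpotent of class $c$. Next, Hartley's theorem on locally finite groups with an element of prime-power order and finite centralizer (which uses the classification of finite simple groups) shows that such an $H$ has a soluble subgroup of $(m,n)$-bounded index; passing to a characteristic soluble subgroup of bounded index, one may assume $H$ is finite and soluble.

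I would then proceed by induction on $n$. The base case $n=1$ is the classical setting of an involution with finite nilpotent centralizer: the Hartley--Meixner theorem yields a characteristic subgroup of $m$-bounded index that is nilpotent of class at most $2$, hence has derived length at most $2$. For the inductive step with $n\geq 2$, I would analyse the Fitting series $F_1\leq F_2\leq\cdots\leq F_k=H$. Since $|C_H(g)|=m$, Hartley's estimates bound the Fitting height $k$ in terms of $m$ (and $n$). Within each nilpotent factor $F_i/F_{i-1}$ the action of $g$ together with the crucial fact that \emph{every} power $g^{2^j}$ commutes with $t=g^{2^{n-1}}$ and therefore lies in the class-$c$ nilpotent subgroup $C_H(t)$ severely restricts the commutator structure on the action. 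Using Kreknin/Higman--Kreknin--Shalev-type Lie-ring methods adapted to almost-fixed-point-free automorphisms of $2$-power order, I would then bound the derived length of each $F_i/F_{i-1}$ in terms of $n$ and $c$, and patch these bounds across the $k$ layers.

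The principal obstacle will be converting the nilpotency of class $c$ of $C_H(t)$ into a uniform bound on the derived length of the Fitting factors without losing control over the indices. Kreknin's theorem alone gives a derived-length bound depending only on the order of the automorphism, so it accounts for the $n$-dependence; but incorporating the $c$-dependence simultaneously with the almost-fixed-point-freeness (that $g$ has only $m$ fixed points, not $0$) requires a more delicate Lie-ring argument. The approach I would pursue is to pass to the associated Lie ring of each nilpotent Fitting factor, invoke a graded/weighted version of Kreknin's theorem together with Khukhro's technique for producing an ideal of bounded codimension on which $g$ acts fixed-point-freely, and return to the group via a Lazard-type correspondence; this is precisely the machinery developed in \cite{khu-mak-shu}, and its careful deployment across the Fitting layers is what makes the total index stay $(m,n,c)$-bounded while the total derived length stays $(n,c)$-bounded.
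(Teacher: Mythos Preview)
The paper does not itself prove Theorem~\ref{t1}; it is quoted from \cite[Corollary~1.3]{khu-mak-shu}, and the surrounding text only indicates the architecture: reduction to soluble groups via Hartley's generalized Brauer--Fowler theorem \cite{ha92}, reduction from soluble to nilpotent via the Hartley--Turau theorem \cite{ha-tu}, the main Lie-ring work of \cite{khu-mak-shu} on the nilpotent case, and the characteristic property from \cite{khu-mak-char}. Your first paragraph matches this outline reasonably well.

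The genuine gap is at your Fitting-series step. You write that ``Hartley's estimates bound the Fitting height $k$ in terms of $m$ (and $n$)'' and then propose to bound the derived length of each Fitting factor in terms of $(n,c)$ and ``patch these bounds across the $k$ layers''. But if $k$ depends on $m$, the total derived length you obtain is at best $(m,n,c)$-bounded, not $(n,c)$-bounded. That is exactly the weaker conclusion of \cite[Theorem~1.10]{shu01}, and the paper explicitly singles out this distinction: ``It is elimination of the dependence of the derived length on $m$ in Theorem~\ref{t1} that made possible its application in the proof of Theorem~\ref{31}.'' As written, your scheme reproduces the older result and misses the point of the strengthening; your final sentence asserts the desired $(n,c)$-bound but nothing in the argument delivers it.

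The missing ingredient is what Hartley--Turau \cite{ha-tu} actually provides: for a finite soluble group admitting an automorphism of prime-power order $p^n$ with $m$ fixed points, one passes to a normal subgroup of $(m,p,n)$-bounded index whose Fitting height is bounded in terms of $p$ and $n$ \emph{only}. Once the number of nilpotent layers no longer depends on $m$, applying the nilpotent case of \cite{khu-mak-shu} to each layer yields a derived length that is genuinely $(n,c)$-bounded, while the index absorbed in the reduction stays $(m,n,c)$-bounded. Your proposed induction on $n$ is not what removes the $m$-dependence; the Hartley--Turau reduction is, and it should replace your appeal to a Fitting-height bound in terms of $m$.
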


The proof in \cite{khu-mak-shu} is mainly about nilpotent groups; reduction to soluble groups was provided earlier by Hartley's `generalized Brauer--Fowler theorem' in \cite{ha92} based on the classification, and reduction to nilpotent groups by the Hartley--Turau  theorem \cite{ha-tu}. The useful property that the subgroup can be chosen to be  characteristic follows from \cite{khu-mak-char}.

 Theorem~\ref{t1} in turn is a strengthening of  Theorem~1.10 in \cite{shu01}, which under the same hypotheses was giving a soluble subgroup of bounded index whose derived length was bounded in terms of $m,n,c$. It is elimination of the dependence of the derived length on $m$ in Theorem~\ref{t1} that made  possible its application in the proof of Theorem~\ref{31}. Similarly,  the aforementioned theorems
in \cite{shu-p,shu-4} on locally finite groups containing an element of prime order $p$ or of order $4$ with Chernikov centralizer used the results with `strong' bounds on   finite groups containing an element of order $p$ or $4$ with finite centralizer. Namely,  in the case of finite centralizer such a group contains a subgroup of bounded index that is nilpotent (or soluble) of nilpotency class (or derived length) bounded in terms of the order of the element only. This was proved for nilpotent groups in   \cite{khu85,khu90, mak93, mak-khu06} generalizing works on fixed-point-free automorphisms \cite{hi, kov, kr, kr-ko}, and the reduction  to nilpotent groups followed from \cite{ fon76, ha-me, ha-tu,tho59}.

\section{Preliminaries}

Given a group $G$ and subsets
$X,Y\subseteq  G$, we denote by $[X,Y]$ the
subgroup generated by all commutators $[x,y]$, where $x\in X$ and $ y\in Y$. It
is well-known that if $X,Y\leqslant G$ are subgroups, then $[X,Y]$ is a normal subgroup of $\langle X,Y\rangle$. Furthermore, if $X$ is a subgroup and $a$ is an element normalizing~$X$, then $[X,a]=[X,\langle a\rangle ]$ is also a normal subgroup of $\langle X,a\rangle$. We denote $[[X,Y],Y]$ by $[X,Y,Y]$.

When a group $A$ acts on a group $G$ by automorphisms (not necessarily faithfully), we regard $G$ and $A$ as subgroups of the semidirect product~$GA$. In accordance with the above notation, $[G,A]$ is the subgroup
of $G$ generated by all elements of the form $g^{-1}g^a$, where $g\in G$
and $a\in A$. Then $[G,A]$ is always an $A$-invariant
normal subgroup of~$G$. The following lemma will be very
helpful. The proof can be easily deduced from the
(well-known) corresponding facts about coprime actions of finite groups.

\begin{lemma}\label{3} Let $A$ be a finite $\pi$-group
of automorphisms of a locally finite group~$G$.
If $N$ is an $A$-invariant normal
$\pi'$-subgroup of~$G$, then $C_{G/N}(A)=C_G(A)N/N$.
\end{lemma}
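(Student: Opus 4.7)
The inclusion $C_G(A)N/N\subseteq C_{G/N}(A)$ is immediate, so the content is in the reverse inclusion. As the hint after the statement indicates, the plan is to reduce to the standard coprime-action fact for finite groups: if a finite $\pi$-group $A$ acts on a finite group $K$ and $M\trianglelefteq K$ is an $A$-invariant $\pi'$-subgroup, then $C_{K/M}(A)=C_K(A)M/M$.

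Pick $g\in G$ with $gN\in C_{G/N}(A)$, that is, $[g,a]\in N$ for every $a\in A$. The key observation is that the $A$-orbit $\{g^a:a\in A\}$ is finite, so by local finiteness of $G$ the subgroup
\[
K=\langle g^a : a\in A\rangle
\]
is a \emph{finite} subgroup of $G$, and by construction it is $A$-invariant. Hence $A$ acts on the finite group $K$, and $M:=K\cap N$ is an $A$-invariant normal $\pi'$-subgroup of $K$ (normal because $N\trianglelefteq G$, a $\pi'$-group because $N$ is, and $A$-invariant because both $K$ and $N$ are).

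Since $[g,a]\in N\cap K=M$ for all $a\in A$, the coset $gM$ lies in $C_{K/M}(A)$. Applying the finite coprime-action lemma to $K$, $A$, $M$ (which applies because $|A|$ is a $\pi$-number and $|M|$ is a $\pi'$-number), we find $g'\in C_K(A)$ with $g'M=gM$. Then $g'\in C_G(A)$ and $g'N=gN$, proving $gN\in C_G(A)N/N$, as required.

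There is no real obstacle here beyond checking that the passage $G\rightsquigarrow K$ is legitimate: one has to note that the finite set $\{g^a:a\in A\}$ generates a finite subgroup (which is where local finiteness of $G$ is used) and that intersecting with $N$ preserves all three properties (normality in $K$, $A$-invariance, being a $\pi'$-group). Once this reduction to a finite group is in place, the statement is just the classical coprime-action result, which in turn is an easy consequence of the Schur--Zassenhaus theorem applied inside $MA\leqslant KA$.
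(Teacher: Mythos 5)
Your proof is correct and follows exactly the route the paper intends: the paper gives no details, saying only that the lemma ``can be easily deduced from the (well-known) corresponding facts about coprime actions of finite groups,'' and your reduction via the finite $A$-invariant subgroup $K=\langle g^a : a\in A\rangle$ with $M=K\cap N$ is precisely that deduction. The one point worth keeping explicit, as you do, is that the finite lemma is needed in the form where only $M$ (not all of $K$) is coprime to $A$, which is the Schur--Zassenhaus version you cite.
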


For a group $G$ let $F(G)$ denote the Hirsch--Plotkin radical of~$G$, which is
the largest locally nilpotent normal subgroup of~$G$. We define $F_0(G)=1$ and let $F_{i+1}(G)$ be the full inverse image of $F(G/F_{i}(G))$ for $i=0,1,\dots$. The group $G$ is said to be of finite Hirsch--Plotkin height $m$ if $G=F_m(G)$ for some integer $m$ and $m$ is the least such integer. In this case we write $h(G)=m$. Of
course, if $G$ is a finite group, then the Hirsch--Plotkin radical $F(G)$ is
just the Fitting subgroup of~$G$. There are many results bounding the
Fitting height of a finite soluble group. In particular, we will use a special case of the
classical result of  Thompson \cite{tho64}  that if $G$ is a finite soluble group admitting a soluble group of automorphisms $A $ of coprime order, then $h(G)$ is bounded in terms of $h(C_G(A))$ and the number of prime factors in $|A|$ only. (Numerous subsequent papers improved the bounds, first linear ones were obtained by  Kurzweil
\cite{kurz}, and best-possible by Turull \cite{tu}).   The standard inverse limit argument as in
\cite[p. 54]{ke-we-b} extends this result to periodic locally soluble groups
$G$ having no $|A|$-torsion. In fact, we only need a special case of this theorem when $A$ is cyclic, stated as the following proposition.

\begin{proposition}\label{17} If a locally finite $2'$-group $G$ admits an
automorphism $\varphi$ of order $2^n$ such that $C_G(\varphi)$ is Chernikov, then
$G$ has finite Hirsch--Plotkin height.
\end{proposition}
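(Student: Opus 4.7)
The plan is to derive Proposition~\ref{17} immediately from the periodic version of Thompson's theorem recalled in the paragraph preceding its statement, applied with the cyclic group $A=\langle\varphi\rangle$ of order $2^n$ acting on~$G$. That result bounds $h(G)$ in terms of $h(C_G(A))$ and the number of prime factors of $|A|$ (here just $n$), provided $G$ is a periodic locally soluble group with no $|A|$-torsion.

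I would verify the three hypotheses in turn. The no-$|A|$-torsion condition is immediate, since $|A|=2^n$ and $G$ is a $2'$-group. Local solubility is also automatic: any finite subgroup of~$G$ has odd order, hence is soluble by the Feit--Thompson odd order theorem, so $G$ is locally soluble. Finally, $h(C_G(\varphi))$ is finite because $C_G(\varphi)$ is Chernikov: by definition it has a divisible abelian normal subgroup $D$ of finite index (a direct product of finitely many quasicyclic $p$-groups); $D$ is abelian and hence locally nilpotent, so $D\leq F_1(C_G(\varphi))$, while the finite quotient $C_G(\varphi)/D$ has finite Fitting height. Therefore
\[
h(C_G(\varphi))\leq 1+h\bigl(C_G(\varphi)/D\bigr)<\infty.
\]

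With all three ingredients in place, the quoted Thompson-type theorem yields $h(G)<\infty$, as required. The proof is really just the assembly of these hypotheses; the substantive work has already been done in \cite{tho64}, in its subsequent refinements by Kurzweil and Turull, and in the Kegel--Wehrfritz inverse-limit extension to periodic locally soluble groups. Accordingly I do not expect any serious obstacle beyond confirming that the cyclic $2$-group case of that extension is indeed what is being invoked, which the authors make explicit in the paragraph immediately preceding the proposition.
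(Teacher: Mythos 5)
Your proposal is correct and follows exactly the route the paper intends: Proposition~\ref{17} is stated as an immediate special case of Thompson's theorem as extended by the Kegel--Wehrfritz inverse-limit argument to periodic locally soluble groups without $|A|$-torsion, and your verification of the hypotheses (Feit--Thompson for local solubility, the divisible part $D$ for $h(C_G(\varphi))<\infty$) is just the routine checking the authors leave implicit. The only point worth making explicit is that the finite quotient $C_G(\varphi)/D$ has finite Fitting height because it is of odd order and hence soluble -- an arbitrary finite group need not have finite Fitting height -- but this is immediate in your setting.
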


A locally finite group $G$ is said to satisfy min-$p$ if every non-empty
family of $p$-subgroups of $G$ partially ordered by inclusion possesses a minimal member. We will  use the following helpful proposition \cite[Corollary 3.2]{ke-we-b}.

\begin{proposition}\label{7} If a locally finite
group $G$ contains elements of order~$p$, then
$G$ satisfies min-$p$ if and only if there is a
$p$-element $g\in G$ such that $C_G(g)$ satisfies
min-$p$.
\end{proposition}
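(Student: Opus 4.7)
The forward implication is immediate, since min-$p$ is inherited by arbitrary subgroups: if $G$ satisfies min-$p$ and $g$ is any $p$-element of $G$ (which exists because $G$ has elements of order $p$), then $C_G(g) \leqslant G$ automatically satisfies min-$p$. So the content of the proposition is the converse direction, and my plan is to establish it by showing that every $p$-subgroup of $G$ is Chernikov; this is equivalent to min-$p$ for a locally finite group, by the Kegel--Wehrfritz--Shunkov characterisation already quoted in the introduction.

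Assume $g$ is a $p$-element with $C_G(g)$ satisfying min-$p$. By Zorn's lemma the $p$-element $g$ is contained in a maximal $p$-subgroup $S$ of $G$, i.e.\ a Sylow $p$-subgroup. The first key observation is that $C_S(g) = S \cap C_G(g)$ is a $p$-subgroup of the min-$p$ group $C_G(g)$, hence Chernikov. The heart of the argument is then the following lemma about locally finite $p$-groups: \emph{if $S$ is a locally finite $p$-group containing an element $g$ such that $C_S(g)$ is Chernikov, then $S$ is itself Chernikov.} I would derive this lemma by invoking Shunkov's criterion (a locally finite $p$-group is Chernikov if and only if its abelian subgroups satisfy the minimum condition), combined with the following commutator/orbit analysis inside $S$: an infinite strictly descending chain of abelian $p$-subgroups of $S$ would force, via the standard orbit--stabiliser count $|A : C_A(g)| = |g^A|$ applied inside finite sub-$p$-groups $\langle A, g\rangle$ together with the fact that each $g^A$ consists of $p$-elements conjugate to $g$, a corresponding infinite descending chain of subgroups of $C_S(g)$, contradicting the Chernikov property established above.

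Once the Sylow $p$-subgroup $S$ containing $g$ is known to be Chernikov, I would conclude by upgrading this to the statement that \emph{every} $p$-subgroup of $G$ is Chernikov. Let $Q$ be an arbitrary $p$-subgroup. Any finite subgroup of $Q$ lies in a finite subgroup of $G$ that also contains $g$ and finite portions of $S$; inside such finite sections, standard finite Sylow theory conjugates finite $p$-subgroups into the Chernikov group $S$, giving uniform bounds on the chief factor structure of finite subgroups of $Q$. Passing to the inverse limit as in \cite[p.~54]{ke-we-b} then yields that $Q$ itself is Chernikov, and so $G$ satisfies min-$p$.

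The main obstacle is the $p$-group lemma in the second paragraph, i.e.\ the reduction of ``Chernikov centralizer of an element'' to ``Chernikov group'' inside a locally finite $p$-group. The lifting from one Chernikov Sylow $p$-subgroup to min-$p$ for all of $G$ in the last paragraph is essentially bookkeeping given Sylow theory for locally finite groups; by contrast, the $p$-group lemma has to control arbitrary abelian sections of $S$ simultaneously, and this is where the interplay between Shunkov's criterion and the Chernikov hypothesis on $C_S(g)$ must be pushed carefully.
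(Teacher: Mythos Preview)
The paper does not give its own proof of this proposition: it is simply quoted as \cite[Corollary~3.2]{ke-we-b}, with no argument supplied. So there is nothing in the paper to compare your attempt against; you are supplying a proof where the authors deliberately defer to the literature.

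As to the attempt itself, the overall architecture (reduce to a $p$-group containing $g$, show that $p$-group is Chernikov, then propagate to all $p$-subgroups) is indeed the shape of the Kegel--Wehrfritz argument, but both of your key steps are only gestured at and, as written, do not go through. In the $p$-group lemma, you claim that an infinite strictly descending chain $A_1>A_2>\cdots$ of abelian subgroups of $S$ yields, via $|A:C_A(g)|=|g^{A}|$, a strictly descending chain inside $C_S(g)$; but nothing prevents the intersections $A_i\cap C_S(g)$ from stabilising while the $A_i$ continue to descend, and the orbit size $|g^{A_i}|$ need not even be finite, so the orbit--stabiliser count does not produce the comparison you need. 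The actual argument in \cite{ke-we-b} for the $p$-group case goes through the structure of normal abelian subgroups and uses rather more than this. In the lifting step, you assert that inside a finite subgroup $F$ of $G$ containing $g$ and a finite piece of $Q$, Sylow theory conjugates finite $p$-subgroups ``into the Chernikov group $S$''; but a Sylow $p$-subgroup of $F$ has no reason to lie in your fixed maximal $p$-subgroup $S$, so the conjugation does not land where you want it, and no ``uniform bound on chief factor structure'' drops out. The genuine argument here needs the conjugacy theory of Sylow subgroups in locally finite groups with min-$p$ (again as developed in \cite{ke-we-b}), not just finite Sylow theory inside arbitrary finite sections.
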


Given a locally finite group~$G$,
we denote by $O_{p'}(G)$ the largest normal $p'$-subgroup
of~$G$.  Theorem 3.17 in
\cite{ke-we-b} tells us that if $G$ is a periodic
locally soluble group
satisfying min-$p$, then $G/O_{p'}(G)$ is Chernikov. Together with Proposition~\ref{7}, this implies the following.

\begin{proposition}\label{p-sol}
If $G$ is a
periodic 
locally soluble group admitting an automorphism of finite $2$-power order with Chernikov centralizer, then $G/O_{2'}(G)$ is a Chernikov group.
\end{proposition}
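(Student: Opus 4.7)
The plan is to combine Proposition~\ref{7} with Theorem~3.17 of \cite{ke-we-b} (the structure theorem for periodic locally soluble groups with min-$p$ quoted in the preceding paragraph) via the standard device of pushing the automorphism into the group through a semidirect product.

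First I would form $\tilde G := G \rtimes \langle \varphi \rangle$. Since a finitely generated periodic soluble group is finite, $G$ is locally finite, and adjoining the finite cyclic group $\langle\varphi\rangle$ keeps $\tilde G$ locally finite. Inside $\tilde G$, the element $\varphi$ is a $2$-element, and a direct computation gives $C_{\tilde G}(\varphi) = C_G(\varphi)\langle\varphi\rangle$: this is an extension of the Chernikov group $C_G(\varphi)$ by the finite group $\langle\varphi\rangle/(C_G(\varphi)\cap \langle\varphi\rangle)$, hence itself Chernikov, and in particular satisfies the minimal condition on all subgroups, so it satisfies min-$2$. Because $\varphi^{2^{n-1}}$ is an involution, $\tilde G$ does contain elements of order $2$, so Proposition~\ref{7} applied to $\tilde G$ with the $2$-element $\varphi$ delivers that $\tilde G$ satisfies min-$2$. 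Since min-$2$ is inherited by subgroups, $G$ satisfies min-$2$ as well.

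Now $G$ is a periodic locally soluble group satisfying min-$2$, so Theorem~3.17 of \cite{ke-we-b} applies directly to give that $G/O_{2'}(G)$ is Chernikov. The degenerate case in which $G$ contains no $2$-element gives $O_{2'}(G)=G$ and is trivial, requiring no separate treatment.

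No step looks genuinely difficult; the entire content of the argument is absorbed into the two cited results, and the only things to verify are the routine facts that enlarging $G$ to $\tilde G$ preserves local finiteness and produces a Chernikov centralizer for $\varphi$. If anything deserves being called the main point, it is the observation that having a Chernikov centralizer for an external $2$-automorphism is already enough to impose min-$2$ on $G$ itself, which is exactly what the semidirect-product step accomplishes.
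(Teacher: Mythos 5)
Your proposal is correct and follows exactly the route the paper intends: the paper gives no separate proof, merely asserting that Proposition~\ref{p-sol} follows from Proposition~\ref{7} together with Theorem~3.17 of \cite{ke-we-b}, and your semidirect-product argument is the standard way to fill in that one-line deduction. All the details you supply (the computation of $C_{\tilde G}(\varphi)$, inheritance of min-$2$ by subgroups, and the degenerate case) check out.
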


Let $C$ be a Chernikov group. A subgroup $D$ of finite index in $C$ that is a
direct product of finitely many groups of type $C_{p^\infty}$ (possibly
for various primes $p$) is of course unique in~$C$. Let $|C:D|=i$ and let $D$ be
a direct product of precisely $j$ groups of type~$C_{p^\infty}$.

\begin{definition}
We call the ordered pair $(j,i)$ the \emph{size} of~$C$.
\end{definition}

The set of all pairs
$(j,i)$ is endowed with the lexicographic order. It is easy to check that if
$H$ is a proper subgroup of~$C$, then the size of $H$ is necessarily strictly
smaller than the size of~$C$. Also, if $N$ is an \emph{infinite} normal subgroup of~$C$, then
the size of $C/N$ is strictly smaller than that of~$C$.  (Note, however, that if $N$ is finite, then the
size of $C/N$ may be equal to that of~$C$.) We shall use these properties of size of Chernikov subgroups without special references.

\section{Proof of the main theorem}

We are now ready to prove our main result, which we restate here in terms of automorphisms.

\begin{theorem} Let $G$ be a locally finite group admitting
an automorphism $\varphi$ of order $2^n$ such that $C_G(\varphi)$ is Chernikov and $C_G(\varphi^{2^{n-1}})$ is nilpotent. Then $G$ has a soluble subgroup of finite index.
\end{theorem}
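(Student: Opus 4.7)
The plan is to carry out several reductions, isolating a locally nilpotent core case that is then attacked by a secondary induction exploiting the uniform derived length bound of Theorem~\ref{t1}. First, by Hartley's theorem \cite{ha88}, $G$ has a locally soluble subgroup $H$ of finite index; replacing $G$ by $\bigcap_{i=0}^{2^n-1}\varphi^i(H)$, which is $\varphi$-invariant and still of finite index, I may assume $G$ itself is locally soluble. Next, Proposition~\ref{p-sol} gives that $G/O_{2'}(G)$ is Chernikov, and this quotient has a $\varphi$-invariant abelian (divisible) subgroup of finite index, so it suffices to find a $\varphi$-invariant soluble subgroup of finite index in $N:=O_{2'}(G)$, and I replace $G$ by $N$. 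Finally, Proposition~\ref{17} now gives $G$ finite Hirsch--Plotkin height~$h$.

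I then induct on $h$ to reduce to $h=1$. For $h\geqslant 2$, the Hirsch--Plotkin radical $F=F(G)$ is locally nilpotent and $\varphi$-invariant; by Lemma~\ref{3} the quotient $G/F$ inherits the hypotheses with height $h-1$ and, by induction, has a $\varphi$-invariant soluble subgroup of finite index $K/F$. Granting the $h=1$ case applied to $F$, there is a $\varphi$-invariant soluble normal subgroup $S$ of finite index in $F$; then $F/S$ is a finite quotient of the locally soluble group $F$ and so is finite soluble, making $K/S$ soluble-by-soluble, hence soluble, and yielding a soluble subgroup of finite index in $G$.

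In the main case $h=1$, $G$ is a periodic locally nilpotent $2'$-group and decomposes as $G=\prod_{p\text{ odd}}G_p$ with each Sylow subgroup $\varphi$-invariant. Since $C_G(\varphi)=\prod_p C_{G_p}(\varphi)$ is Chernikov, all but finitely many factors $C_{G_p}(\varphi)$ are trivial. For those $p$ with $C_{G_p}(\varphi)=1$, Kreknin's theorem, applied inside each finitely generated $\varphi$-invariant (finite nilpotent) subgroup, shows that $G_p$ is soluble of derived length at most $2^{2^n}-2$, uniformly in $p$; the product of all such $G_p$ is therefore already soluble of bounded derived length. It remains to show that each of the finitely many $G_p$ with nontrivial Chernikov centralizer contains a $\varphi$-invariant soluble subgroup of finite index.

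This last piece is the main obstacle. I would approach it by induction on the size of $C_{G_p}(\varphi)$: the base case, with $C_{G_p}(\varphi)$ finite, follows from Theorem~\ref{t1} applied to $G_p\langle\varphi\rangle$; in the inductive step I would factor out a suitable $\varphi$-invariant Pr\"ufer $p$-subgroup $A$ inside the divisible part of $C_{G_p}(\varphi)$ to strictly decrease the centralizer size, and apply induction to $G_p/A$. The delicate point is to lift solubility of finite index from $G_p/A$ back to $G_p$: the preimage $H$ of the soluble-of-finite-index subgroup satisfies $H/A$ soluble but may itself fail to be soluble. Here the essential feature of Theorem~\ref{t1}, emphasized in the introduction, is that the derived length $d=d(n,c)$ it provides is independent of the order of the centralizer; approximating $H$ by finite $\varphi$-invariant subgroups and applying Theorem~\ref{t1} with this uniform bound, one can force the verbal subgroup $H^{(d)}$ to lie inside a Chernikov subgroup controlled by $A$ together with the centralizer-bounded quotient data. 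Since $H$ is locally soluble, the Chernikov group $H^{(d)}$ is then soluble, and $H/H^{(d)}$ has derived length at most $d$, so $H$ is itself soluble of finite index in $G_p$, completing the argument.
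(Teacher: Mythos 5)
Your opening reductions (to a locally soluble group, then to a $2'$-group via Proposition~\ref{p-sol}, then to finite Hirsch--Plotkin height and finally to the locally nilpotent case) agree in substance with the paper, and the detour through the primary decomposition and Kreknin's theorem for the primes with trivial centralizer is harmless. But the proof has to happen in the remaining case of a locally nilpotent $p$-group with infinite Chernikov centralizer, and there your argument has two genuine gaps. First, a quasicyclic subgroup $A$ of $C_{G_p}(\varphi)$ is in general not normal in $G_p$, so the quotient $G_p/A$ on which your induction on centralizer size is supposed to run is simply not defined; you cannot ``factor out'' a Pr\"ufer subgroup of the centralizer. Second, and more seriously, the lifting step --- recovering solubility-by-finite of $H$ from that of $H/A$ --- is exactly the crux of the whole theorem, and the sentence about forcing $H^{(d)}$ into ``a Chernikov subgroup controlled by $A$ together with the centralizer-bounded quotient data'' names no mechanism at all: nothing in Theorem~\ref{t1} applied to finite $\varphi$-invariant subgroups of $H$ tells you that $H^{(d)}$ is Chernikov, since those finite subgroups need not have small centralizer and the conclusion of Theorem~\ref{t1} only gives a soluble subgroup of \emph{bounded index}, not a bound on the whole derived series.

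The paper's route through this bottleneck is quite different and worth internalizing. It takes a counterexample with $C=C_G(\varphi)$ of minimal size among locally nilpotent sections; minimality forces every proper normal $\varphi$-invariant subgroup to meet $C$ in a \emph{finite} group, and allows one to arrange $G=G'$ and $[G,A]A=G$ for a quasicyclic $A\leqslant C$. Local nilpotence then gives that $[G,a]$ is a \emph{proper} normal $\varphi$-invariant subgroup for each $a\in A$, hence has finite centralizer, so Theorem~\ref{t1} yields a characteristic soluble subgroup $K_a\leqslant [G,a]$ of finite index with derived length at most $d$ \emph{uniformly in $a$}; divisibility of $A$ forces $[G,a,a]\leqslant [[G,a],A]\leqslant K_a$, and since every finite subset of $G$ lies in some $[G,a,a]$, the group $G$ is soluble of derived length at most $d$, a contradiction. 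So the uniform bound of Theorem~\ref{t1} is indeed the engine, but it is harnessed through normal subgroups of the form $[G,a]$ and a local-generation argument, not through a quotient-and-lift induction on the size of $C$.
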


\begin{proof} The group $G$ has a locally soluble subgroup of finite index by Hartley's theorem  \cite[Theorem~4]{ha88}, so without
loss of generality we can assume that $G$ is locally soluble. Then
$G/O_{2'}(G)$ is a Chernikov group by Proposition~\ref{p-sol}, so we can assume that $G$ is a $2'$-group.   Then the group $G$ has finite Hirsch--Plotkin height by
Proposition \ref{17}. Since $C_G(\varphi )$ covers the centralizers of the induced automorphism $\varphi$ in every invariant section by Lemma~\ref{3}, it follows that  if the theorem is false, then a counter-example to the theorem can be found among locally nilpotent sections of~$G$. Therefore without loss of generality we can additionally assume that $G$ is locally nilpotent.

Thus, arguing by contradiction we choose a locally nilpotent group $G$ that is a counter-example to the theorem such that the size of
$C=C_G(\varphi)$ is as small as possible. In view of Theorem~\ref{t1} it is clear that $C$ is infinite. Let $G^{(i)}$ denote the $i$th term of the derived series of~$G$. Each one of the groups $G^{(i)}$ provides a counter-example to the theorem. Indeed, if $G^{(i)}$ has a soluble subgroup of finite index, then $G^{(i)}$ itself is soluble since $G$ is locally nilpotent.  This gives solubility of~$G$, a contradiction.
Since the size of $C$ is as small as possible, it follows that $C\leqslant G^{(i)}$ for each~$i$, that is, $C$ is contained in the intersection of the~$G^{(i)}$. According to Theorem~1.10 in \cite{shu01} (or Theorem~\ref{t1}), there exists $k$ such that whenever $\varphi$ acts fixed-point-freely on the quotient $G/N$ by a normal $\varphi$-invariant subgroup~$N$, the derived length of $G/N$ is at most~$k$. On the other hand, Lemma \ref{3} shows that $\varphi$ acts fixed-point-freely on each quotient $G/G^{(i)}$. It follows that $G^{(k)}=G^{(k+1)}$. Thus, considering $G^{(k)}$ in place of $G$ we can make the additional assumption that
\begin{itemize}
\item[(1)\;\;] $ G=G'$.
\end{itemize}

Next, we can assume that the following holds.
\begin{itemize}
\item[(2)\;\;] If $N$ is a proper normal $\varphi$-invariant subgroup of~$G$, then the
centralizer $C_N(\varphi)$ is finite.
\end{itemize}
Indeed, suppose that $C_N(\varphi)$ is infinite. Since the size of $C$
is as small as possible, we deduce that $G/N$ is soluble, a contradiction with (1).

Let $A$ be a quasicyclic $p$-subgroup of $C$ for some prime~$p$. We claim that
\begin{itemize}
\item[(3)\;\;] $[G,A]=G$.
\end{itemize}
Indeed, $[G,A]A$ is a normal $\varphi$-invariant subgroup of $G$ having
an infinite intersection with~$C$. Now (2) implies that $[G,A]A=G$. Putting
this together with $G=G'$ we conclude that $[G,A]=G$.

We will use the fact that if $x,y\in A$, then necessarily either
$\langle x\rangle\leqslant\langle y\rangle$ or
$\langle y\rangle\leqslant\langle x\rangle$. It follows that
\begin{itemize}
\item[(4)\;\;] for any finite subset $S$ of $A$ we can choose $a\in S$ such that
$[N,S]=[N,a]$ for any normal subgroup~$N$.
\end{itemize}

Let $T$ be a finite subset of~$G$. Since $G=[G,A]$, every element of $T$
can be written as a product of finitely many commutators $[g_i,a_i]$
with $g_i\in G,a_i\in A$. In turn, each of the elements $g_i$ can be written
as a similar product of $[h_j,b_j]$ with $h_j\in G$ and $b_j\in A$. Let
$S$ be the set of all $a_i$ that appear in the commutators $[g_i,a_i]$
united with the set of all $b_j$ that appear in $[h_j,b_j]$. It is obvious
that $T\subseteq[G,S,S]$. Thus, we derive from (4) that
\begin{itemize}
\item[(5)\;\;] for any finite subset $T$ of $G$ there exists $a\in A$ such that
$T\subseteq [G,a,a]$.
\end{itemize}

Since the group $G$ is locally nilpotent,
\begin{itemize}
\item[(6)\;\;] $[G,g]$ is a proper subgroup for every  $g\in G$.
\end{itemize}
This property may be well known, but we could not find a reference, so we give a short proof here. Namely, we claim that $g\not\in [G,g]$ if $g\ne 1$. Indeed, otherwise $g$ is equal to a product of several commutators $[g,h_i]$ for finitely many elements $h_i\in G$, $i=1,\dots ,k$. Substituting  the same expression for $g$ into these commutators we obtain that $g$ is a product of commutators of weight at least 3 each involving the element~$g$. Repeatedly substituting in similar fashion expressions for $g$ we eventually obtain an expression for $g$ as a product of commutators in the same elements $ g,h_1,\dots ,h_k$ of weight exceeding the nilpotency class of the finitely generated subgroup  $\langle g,h_1,\dots ,h_k\rangle$, so that $g=1$.

We now consider the subgroup $[G,a]$ for $a\in A$. By (6) this is a proper normal subgroup of~$G$, which is obviously also
$\varphi$-invariant. So we conclude from (2) that $[G,a]$ has finite
intersection with $C_G(\varphi)$ for any $a\in A$. Therefore  by Theorem~\ref{t1} we obtain that $[G,a]$ contains a characteristic  subgroup $K_a$ of finite index that is soluble with derived length at most~$d$, where $d$ is independent of~$a$. Note that both $[G,a]$ and $K_a$ are $A$-invariant. Since $[G,a]/K_a$ is finite and since $A$ has no subgroups of finite index, it follows that $A$ centralizes the section $[G,a]/K_a$. Thus,
\begin{itemize}
\item[(7)\;\;] $[[G,a],A]\leqslant K_a$ for every $a\in A$.
\end{itemize}

Combining this with (5), we deduce that for an arbitrary finite subset
$T$ of $G$ there exists $a\in A$ such that $T\subseteq K_a$. Therefore any finite subset of $G$ generates a soluble subgroup with derived length at most~$d$. It follows that $G$ is soluble of derived length at most~$d$. This contradicts~(1). The proof is complete.
\end{proof}

\end{document}